\documentclass{article}
\usepackage{geometry}
\usepackage[british]{babel}
\usepackage{ae}
\usepackage{latexsym}
\usepackage{amsfonts}
\usepackage{amssymb}
\usepackage{amsthm}
\usepackage{graphicx}
\usepackage{multirow}
\usepackage{color}
\usepackage{youngtab}
\usepackage{young}
\usepackage{amsmath}
\usepackage{amscd}
\usepackage{float}
\usepackage{hyperref}

\newcounter{itemcounter}
\numberwithin{itemcounter}{subsection}

\newtheorem{thm}[itemcounter]{Theorem}

\newtheorem*{thm*}{Theorem}
\newtheorem*{con*}{Conjecture}
\newtheorem*{cor*}{Corollary}
\newtheorem*{ack*}{Acknowledgements}

\title{A note on perfect isometries between finite general linear and unitary groups at unitary primes}
\author{Michael Livesey}
\date{}

\begin{document}

\maketitle

\begin{abstract}
Let $q$ be a power of a prime, $l$ a prime not dividing $q$, $d$ a positive integer coprime to both $l$ and the multiplicative order of $q\mod l$ and $n$ a positive integer.
A. Watanabe proved that there is a perfect isometry between the principal $l-$blocks of $\operatorname{GL}_n(q)$ and $\operatorname{GL}_n(q^d)$ where the
correspondence of characters is give by Shintani descent. In the same paper Watanabe also prove that if $l$ and $q$ are odd and $l$ does not divide
$|\operatorname{GL}_n(q^2)|/|\operatorname{U}_n(q)|$ then there is a perfect isometry between the principal $l-$blocks of $\operatorname{U}_n(q)$ and $\operatorname{GL}_n(q^2)$
with the correspondence of characters also given by Shintani descent. R. Kessar extended this first result to all unipotent blocks of $\operatorname{GL}_n(q)$ and
$\operatorname{GL}_n(q^d)$. In this paper we extend this second result to all unipotent blocks of $\operatorname{U}_n(q)$ and $\operatorname{GL}_n(q^2)$. In particular this proves
that any two unipotent blocks of $\operatorname{U}_n(q)$ at unitary primes (for possibly different $n$) with the same weight are perfectly isometric. We also prove that this perfect
isometry commutes with Deligne-Lusztig induction at the level of characters.
\end{abstract}

\section{Introduction}\label{sec:intro}
\subsection{Perfect isometries}
Let $l$ be a prime and consider an $l-$modular system $(K,\mathcal{O},k)$, such that $K$ contains enough roots of unity for the groups being considered in this paper. Let $G$ and $H$
be finite groups and $b$ and $c$ block idempotents of $\mathcal{O}G$ and $\mathcal{O}H$ respectively. We denote by $\operatorname{Irr}(G,b)$ the set of irreducible characters of
$\mathcal{O}Gb$. A perfect isometry (see~\cite{broue1990}) between $\mathcal{O}Gb$ and $\mathcal{O}Hc$ is an isometry
\begin{align*}
&I:\mathbb{Z}\operatorname{Irr}(G,b)\to\mathbb{Z}\operatorname{Irr}(H,c),\\
\text{with }&I(\mathbb{Z}\operatorname{Irr}(G,b))=\mathbb{Z}\operatorname{Irr}(H,c),
\end{align*}
where
\begin{align*}
\hat{I}:G\times H&\to\mathcal{O}\\
(x,y)&\mapsto\sum_{\chi\in\operatorname{Irr}(G,b)}\chi(x)I(\chi)(y)
\end{align*}
has the property that $\hat{I}(x,y)$ is divisible by $|C_G(x)|$ and $|C_G(y)|$ in $\mathcal{O}$ for all $x\in G$ and $y\in H$ and that $\hat{I}(x,y)=0$
if in addition exactly one of $x$ and $y$ is $l-$singular.

\subsection{Shintani descent}

Let $n$ be a positive integer and $q$ a power of a prime $p$. We denote by $F$ be the Frobenius endomorphism of $\mathbf{G}:=\operatorname{GL}_n(\overline{\mathbb{F}_q})$ defined by
\begin{align*}
F:\mathbf{G}&\to\mathbf{G}\\
A&\mapsto JA^{-t[q]}J,
\end{align*}
where $A^{[q]}$ is the matrix obtained from $A$ by raising every entry to the power $q$, $A^{-t}$ is the inversed transpose of $A$ and $J$ is the $n\times n$ matrix with $1$'s on the
anti-diagonal and $0$'s everywhere else. We define
\begin{align*}
\mathbf{G}^F:=&\operatorname{U}_n(q)\\
\mathbf{G}^{F^2}:=&\operatorname{GL}_n(q^2).
\end{align*}
We denote by $\sigma$ the restriction of $F$ to $\mathbf{G}^{F^2}$. Two elements $g,h\in\mathbf{G}^{F^2}$ are said to be $\sigma-$conjugate if $xg\sigma(x)^{-1}=h$ for some
$x\in\mathbf{G}^{F^2}$. If $g\in\mathbf{G}^{F^2}$ then $g\sigma(g)$ is conjugate in $\mathbf{G}^{F^2}$ to an element of $\mathbf{G}^F$. This induces a bijection $N_{F^2/F}$ between
the $\sigma-$conjugacy classes in $\mathbf{G}^{F^2}$ and the conjugacy classes in $\mathbf{G}^F$.
\newline
\newline
If $G$ is a finite group and $X\subset G$ is a union of conjugacy classes of $G$ then we denote by $\mathcal{C}(X)$ the set of functions $X\to\mathcal{O}$ invariant on conjugacy
classes. If $g,h\in\mathbf{G}^{F^2}$ then $g$ and $h$ are $\sigma-$conjugate if and only if $g\sigma$ and $h\sigma$ are conjugate in $\mathbf{G}^{F^2}\rtimes\langle\sigma\rangle$.
With this in mind we define
\begin{align*}
\operatorname{Sh}_{F^2/F}:\mathcal{C}(\mathbf{G}^{F^2}\sigma)\to\mathcal{C}(\mathbf{G}^F)
\end{align*}
to be the map induced by $N_{F^2/F}$. The following theorem is due to Kawanaka~\cite[Theorem 4.1]{kawanaka1977}.

\begin{thm}\label{thm:shintani}
Let $\chi$ be an irreducible character of $\mathbf{G}^{F^2}$ stable under $\sigma$. Then there is a unique extension $\tilde{\chi}$ of $\chi$ to
$\mathbf{G}^{F^2}\rtimes\langle\sigma\rangle$ such that $\operatorname{Sh}_{F^2/F}(\tilde{\chi})$ is an irreducible character of $\mathbf{G}^F$. Moreover,
$\chi\mapsto\operatorname{Sh}_{F^2/F}(\tilde{\chi})$ gives a bijection between the set of irreducible characters of $\mathbf{G}^{F^2}$ stable under $\sigma$ and the
set of irreducible characters of $\mathbf{G}^F$.
\end{thm}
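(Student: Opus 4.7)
The plan is to combine Clifford theory, a Lang--Steinberg dimension count, and an isometry computation, with the main technical issue being the integrality of the Shintani descent.

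Since $\langle\sigma\rangle$ has order $2$ and $\chi$ is $\sigma$-stable, Clifford theory shows that $\chi$ has exactly two extensions to $\tilde G:=\mathbf{G}^{F^2}\rtimes\langle\sigma\rangle$, namely $\tilde\chi$ and $\tilde\chi\cdot\varepsilon$, where $\varepsilon$ is the non-trivial linear character of $\tilde G/\mathbf{G}^{F^2}$. Thus $\tilde\chi|_{\mathbf{G}^{F^2}\sigma}$ is determined up to a global sign, and a direct inner product calculation on $\tilde G$ gives $\|\tilde\chi|_{\mathbf{G}^{F^2}\sigma}\|=1$ in the natural $|\mathbf{G}^{F^2}|^{-1}$-normalised inner product on $\mathcal{C}(\mathbf{G}^{F^2}\sigma)$.

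I would then show that $\operatorname{Sh}_{F^2/F}$ is an isometry $\mathcal{C}(\mathbf{G}^{F^2}\sigma)\to\mathcal{C}(\mathbf{G}^F)$. The key point is that for $g\in\mathbf{G}^{F^2}$, the $\mathbf{G}^{F^2}$-centraliser of $g\sigma$ inside $\tilde G$ coincides with the $\sigma$-centraliser of $g$, and by applying Lang--Steinberg to the (connected) centraliser $C_{\mathbf{G}}(g\sigma(g))$ this has the same order as $C_{\mathbf{G}^F}(N_{F^2/F}(g))$; the class-size factors then match term-by-term. A parallel counting argument gives $|\{\chi\in\operatorname{Irr}(\mathbf{G}^{F^2}):{}^\sigma\chi=\chi\}|=|\operatorname{Irr}(\mathbf{G}^F)|$: the left side equals the number of $\sigma$-fixed conjugacy classes of $\mathbf{G}^{F^2}$, which equals the number of $\sigma$-conjugacy classes, which by $N_{F^2/F}$ is in bijection with the conjugacy classes of $\mathbf{G}^F$.

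With the isometry in hand, $\operatorname{Sh}_{F^2/F}(\tilde\chi)$ is a norm-one class function on $\mathbf{G}^F$, and the two extensions produce functions differing by a sign; provided one knows this class function is a virtual character, it must therefore be $\pm$ an irreducible character, exactly one choice of extension yields an element of $\operatorname{Irr}(\mathbf{G}^F)$, and the counting step upgrades the resulting map to a bijection. The main obstacle is precisely this integrality: a priori $\operatorname{Sh}_{F^2/F}(\tilde\chi)$ is only an $\mathcal{O}$-valued class function. I would handle this by first pinning down a reference extension for a convenient $\sigma$-stable character---the Gelfand--Graev character is the natural candidate, being multiplicity-free and having its Shintani descent directly identifiable with the Gelfand--Graev character of $\mathbf{G}^F$---and then using the resulting Frobenius-type reciprocity to force every other Shintani descent to lie in $\mathbb{Z}\operatorname{Irr}(\mathbf{G}^F)$.
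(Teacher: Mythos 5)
This theorem is not proved in the paper at all: it is attributed directly to Kawanaka (\cite[Theorem 4.1]{kawanaka1977}) and used as a black box. So there is no in-paper proof to compare against; what I can do is assess your outline on its own merits and against Kawanaka's argument.

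Your skeleton is the right soft framework, and most of it checks out. The Clifford-theoretic step (exactly two extensions of a $\sigma$-stable $\chi$, differing by the order-$2$ linear character of $\tilde{G}/\mathbf{G}^{F^2}$) is correct, and the computation $\langle\tilde\chi|_{\mathbf{G}^{F^2}\sigma},\tilde\psi|_{\mathbf{G}^{F^2}\sigma}\rangle=2\langle\tilde\chi,\tilde\psi\rangle_{\tilde G}-\langle\chi,\psi\rangle$ does give norm $1$. The isometry claim for $\operatorname{Sh}_{F^2/F}$ is also correct: for $\operatorname{GL}_n$ all centralisers in $\mathbf G$ are connected, Lang--Steinberg applied to $C_{\mathbf G}(g\sigma(g))$ identifies $|C_{\mathbf{G}^{F^2},\sigma}(g)|$ with $|C_{\mathbf{G}^F}(N_{F^2/F}(g))|$, and the two normalised inner products match class by class. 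Brauer's permutation lemma plus the standard count of conjugacy classes in the coset $\mathbf{G}^{F^2}\sigma$ gives the equality $\bigl|\{\chi\in\operatorname{Irr}(\mathbf{G}^{F^2}): {}^\sigma\chi=\chi\}\bigr|=\bigl|\operatorname{Irr}(\mathbf{G}^F)\bigr|$, so the final counting step is fine.

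The gap you yourself flag---integrality of $\operatorname{Sh}_{F^2/F}(\tilde\chi)$---is real, and the proposed Gelfand--Graev patch does not close it. The Gelfand--Graev character $\Gamma$ of $\mathbf{G}^{F^2}$ is multiplicity free and contains exactly the \emph{regular} characters, one from each Lusztig series; an arbitrary $\sigma$-stable irreducible (in particular every non-trivial unipotent character, which is precisely what this paper needs) does not occur in $\Gamma$ at all, so knowing $\operatorname{Sh}_{F^2/F}(\tilde\Gamma)=\Gamma_{\mathbf{G}^F}$ plus Frobenius reciprocity pins down nothing about the Shintani descent of those $\chi$. You would need something that sees every irreducible with controlled, integer multiplicity---e.g.\ generalised Gelfand--Graev representations, or the full Deligne--Lusztig/Green-function apparatus---and establishing the compatibility of Shintani descent with those is itself the hard part. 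This is in fact exactly what Kawanaka does: his proof is not a soft isometry-plus-counting argument but an explicit computation, using Green's parametrisation of $\operatorname{Irr}(\operatorname{GL}_n(q^2))$, Deligne--Lusztig (``basic'') characters, and Ennola-type duality to identify the descended class function with a concretely parametrised irreducible character of $\operatorname{U}_n(q)$. So your approach is a genuinely different (and, if the integrality step could be supplied, more conceptual) route, but as written the crucial step is asserted rather than proved.
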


We remark that a corresponding theorem was proved for the characters of $\operatorname{GL}_n(q)$ and $\operatorname{GL}_n(q^d)$, for some integer $d$, in theorems of
T. Shintani~\cite[Theorem 1, Lemma 2-11]{shintani1976} and C. Bonnaf\'{e}~\cite[Theorem 4.3.1]{bonnaf1998}. We therefore refer to the bijection in
Theorem~\ref{thm:shintani} as Shintani descent.

\section{The characters and unipotent blocks of \texorpdfstring{$\operatorname{GL}_n(q^2)$}{TEXT} and \texorpdfstring{$\operatorname{U}_n(q)$}{TEXT}}

We follow P. Fong and B. Srinivasan~\cite{fonsri1982} for the parametrisation of the characters and blocks of $\operatorname{GL}_n(q^2)$ and $\operatorname{U}_n(q)$.
If $\Gamma(X)\in\mathbb{F}_{q^2}[X]$ we denote by $d_\Gamma$
the degree of $\Gamma$. We set $\Delta$ to be the set of monic irreducible polynomials over $\mathbb{F}_{q^2}$ with non-zero roots. We then
define the following auotmorphism on $\Delta$:
\begin{align*}
\Gamma=&(X^m+a_{m-1}X^{m-1}+\dots+a_1X+a_0)\mapsto\\
\widetilde{\Gamma}=&{a_0}^{-q}({a_0}^{q}X^m+{a_1}^{q}X^{m-1}+\dots+{a_{m-1}}^{q}X+1),
\end{align*}
and define
\begin{align*}
\Lambda_1:=&\{\Gamma\mid \Gamma\in\Delta,\widetilde{\Gamma}=\Gamma\},\\
\Lambda_2:=&\{\Gamma\widetilde{\Gamma}\mid \Gamma\in\Delta,\widetilde{\Gamma}\neq\Gamma\}\\
\Lambda:=&\Lambda_1\cup\Lambda_2.
\end{align*}
The irreducible characters of $G=\operatorname{GL}_n(q^2)$ (respectively $G=\operatorname{U}_n(q)$) are labelled by pairs $(s,\lambda)$, where $s$ is a semisimple element of
$\operatorname{GL}_n(q^2)$ (respectively $\operatorname{U}_n(q)$) and $\lambda=(\lambda_\Gamma)_\Gamma$ is a multipartition with $\Gamma$ running over
$\Delta$ (respectively $\Lambda$) and $|\lambda_\Gamma|=n_\Gamma(s)$, where $n_\Gamma(s)$ is the multiplicity of $\Gamma$ in the characteristic polynomial of $s$.
(In fact the conjugacy class of $s$ is completely determined by the $n_\Gamma(s)$'s.)
We denote such a character by $\chi^{GL}_{s,\lambda}$ (respectively $\chi^{U}_{s,\lambda}$) and $\chi^G_{s,\lambda}=\chi^G_{t,\mu}$ if and only if $s\sim_Gt$
and $\lambda_\Gamma=\mu_\Gamma$ for all $\Gamma$. In the case of $s=1$ we call the character unipotent.
\newline
\newline
Set $e$ to be the multiplicative order of $q^2\mod l$ (respectively $-q\mod l$). For $G=\operatorname{U}_n(q)$ we have the notion of $l$ being a linear or unitary prime
depending on whether $e$ is even or odd respectively. For $\Gamma\in\Delta\cup\Lambda_2$ we define
\begin{align*}
e_\Gamma:=
\begin{cases}
\text{the additive order of }2d_\Gamma\mod e&\text{if }G=\operatorname{U}_n(q),\Gamma\in\Lambda_1\text{ and } l \text{ is linear}\\
\text{the additive order of }d_\Gamma\mod e&\text{otherwise}.
\end{cases}
\end{align*}
The unipotent $l-$blocks of $G=\operatorname{GL}_n(q^2)$ (respectively $G=\operatorname{U}_n(q)$) are labelled by $e-$cores $\lambda$ of partitions of $n$. We label such a block
$B^{GL}_{1,\lambda}$ (respectively $B^{U}_{1,\lambda}$). Then any sylow $l-$subgroup $D$ of the subgroup $\operatorname{GL}_{n-|\lambda|}(q^2)\leq\operatorname{GL}_n(q^2)$
(respectively $\operatorname{U}_{n-|\lambda|}(q)\leq\operatorname{U}_n(q)$) is a defect group for $B^G_{1,\lambda}$. An irreducible character $\chi^G_{t,\mu}$ lies in this block
if and only if $t\sim_Gx$ for some $x\in D$, $\mu_{X-1}$ has $e-$core $\lambda$ and $\lambda_\Gamma$ has empty $e_\Gamma-$core for all other $\Gamma$.
\newline
\newline
For our main theorem we will need to know the degrees of the unipotent characters of $G$. For our description
we again follow~\cite[$\S1$]{fonsri1982}.
\newline
\newline
Let $\lambda=(\lambda_1,\lambda_2,\dots)\vdash n$ and $\lambda'=(\lambda_1',\lambda_2',\dots)$ be its conjugate partition. This notation will continue
throughout the paper. We introduce the polynomial
\begin{align*}
P_\lambda(X):=\frac{X^d(X^n-1)(X^{n-1}-1)\dots(X-1)}{\prod_h(X^h-1)},
\end{align*}
where $d:=\sum_i\lambda_i'^2-\sum_ii\lambda_i$ and $h$ runs over the hook lengths of $\lambda$. Then $\chi^{GL}_{1,\lambda}(1)=P_\lambda(q^2)$ and
$\chi^{U}_{1,\lambda}(1)=|P_\lambda(-q)|$.

\section{Main theorem}

%
%
For any multipartition $\lambda$ indexed by $\Delta$ we denote by ${}^F\lambda$ the multipartition given by ${}^F\lambda_{\widetilde{\Gamma}}=\lambda_\Gamma$.
We now state our main theorem.

\begin{thm}\label{thm:perfisom}
Suppose $l$ and $q$ are odd with $l$ a unitary prime with respect to $q$ and let $\lambda$ be an $e-$core of a partition of $n$. Then all irreducible
characters of $B^{GL}_{1,\lambda}$ are $\sigma$-stable and $B^{GL}_{1,\lambda}$ and $B^{U}_{1,\lambda}$ are perfectly isometric with the bijection of
unipotent characters given by Shintani descent.
\end{thm}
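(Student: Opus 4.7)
The plan is to proceed in four steps: (i) verify $\sigma$-stability of every character in $B^{GL}_{1,\lambda}$, (ii) apply Theorem~\ref{thm:shintani} to obtain a character bijection, (iii) identify the image as $B^U_{1,\lambda}$, and (iv) verify the perfect-isometry condition.

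For (i), a short $l$-valuation calculation using that $e$ is odd shows $l\nmid |\operatorname{GL}_n(q^2)|/|\operatorname{U}_n(q)|$, so $|\operatorname{GL}_n(q^2)|_l = |\operatorname{U}_n(q)|_l$ and a defect group $D$ for $B^{GL}_{1,\lambda}$ may be chosen inside $\operatorname{U}_{n-|\lambda|}(q) \subseteq \mathbf{G}^F$. Then any $t \in D$ satisfies $\sigma(t)=t$, and it remains to show ${}^F\mu=\mu$. Since $t$ is an $l$-element, $\mu_\Gamma$ can be non-empty only when the roots of $\Gamma$ are $l$-power roots of unity; for such $\Gamma$ one has $\widetilde\Gamma=\Gamma$, since $\widetilde\Gamma$ has roots $\zeta^{-q}$ and the equation $\zeta^{-q}=\zeta^{q^{2j}}$ is equivalent to $q^{2j-1}\equiv -1\pmod{l^k}$ (where $l^k$ is the order of $\zeta$), which is solvable because $e$ is odd and $q^{el^{k-1}}\equiv -1$. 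Hence every character in the block is $\sigma$-stable.

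For (ii) and (iii), Theorem~\ref{thm:shintani} yields a bijection $\operatorname{Sh}\colon \operatorname{Irr}(B^{GL}_{1,\lambda})\to\operatorname{Irr}(\mathbf{G}^F)$; I would track the Fong--Srinivasan labels through this map. For unipotent characters ($t=1$) Shintani descent sends $\chi^{GL}_{1,\mu}$ to $\varepsilon_\mu\chi^U_{1,\mu}$ for an appropriate sign $\varepsilon_\mu$ built from hook-length combinatorics; for general $t$ I would use compatibility of Shintani descent with Jordan decomposition to reduce to the unipotent case inside the centraliser. This identifies the image as $B^U_{1,\lambda}$. For (iv), defining $I=\pm\operatorname{Sh}$ with appropriate signs, $\hat I(x,y)$ expands via $I(\chi)(y)=\tilde\chi(g\sigma)$ (with $y=N_{F^2/F}(g)$) into a twisted character sum on $\mathbf{G}^{F^2}\rtimes\langle\sigma\rangle$, whose divisibility and $l$-singular vanishing follow from second orthogonality combined with the coincident combinatorial structure of the two blocks.

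The hardest step is the perfect-isometry verification in (iv). Although Watanabe's principal-block result and Kessar's theorem for $\operatorname{GL}_n(q^d)$ handle special cases, combining them for arbitrary unipotent blocks in the unitary setting requires careful bookkeeping of signs arising from the involution $\Gamma\mapsto\widetilde\Gamma$ and from Jordan decomposition. I would expect to adapt Kessar's basic-set argument to the unitary setting, verifying commutation with Deligne--Lusztig induction (the second assertion of the theorem) in parallel as a structural tool.
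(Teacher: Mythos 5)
Your steps (i) and the general framing (use Kawanaka's bijection, then verify the perfect isometry conditions as in Watanabe and Kessar) are consistent with the paper, and your $\sigma$-stability argument via $\widetilde\Gamma=\Gamma$ for polynomials with $l$-power roots of unity is essentially the argument given there. But there is a genuine gap at the heart of your steps (ii)--(iv): you simply assert that Shintani descent sends $\chi^{GL}_{1,\mu}$ to (a sign times) $\chi^U_{1,\mu}$, i.e.\ that the Fong--Srinivasan label is preserved. This identification is not a known fact one can ``track'' through the bijection; it is the main technical content of the proof. The paper establishes it by induction on $n$: one uses the compatibility of Shintani descent with Harish-Chandra induction from the Levi $\operatorname{GL}_1\times\operatorname{GL}_{n-2}\times\operatorname{GL}_1$ (Digne's generalisation of Shintani's Lemma 2.8), compares the two branching rules of~\cite[Proposition 1C]{fonsri1982} (adding two $1$-hooks versus adding a $2$-hook), and then still has to handle separately the partitions of $2$-weight $0$ and $1$, which the induction does not reach; these are dealt with by letting $l$ (hence $e$) be large, adding $e$-hooks, and exploiting the fact that Shintani descent sends blocks to blocks. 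None of this is in your proposal, and without it the identification of the image block as $B^U_{1,\lambda}$ and the degree comparison both hang in the air. (Note also that by Theorem~\ref{thm:shintani} the descent of the canonical extension is an actual irreducible character, so there is no sign $\varepsilon_\mu$ to introduce at this stage.)

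Two further points. First, in (iv) the quantitative input needed to run Watanabe's and Kessar's argument is not ``second orthogonality'' but the statement that the central character value $w_{\tilde\chi}((\sigma))=\frac{|\operatorname{GL}_n(q^2)|}{|\operatorname{U}_n(q)|}\cdot\frac{\operatorname{Sh}_{F^2/F}(\tilde\chi)(1)}{\chi(1)}$ is well behaved mod $l$, which (since $|\operatorname{GL}_n(q^2)|_l=|\operatorname{U}_n(q)|_l$) reduces exactly to showing $\operatorname{Sh}_{F^2/F}(\tilde\chi)(1)/\chi(1)\not\equiv 0 \bmod l$ for unipotent $\chi$; this in turn follows from the label-preservation above together with $P_\lambda(q^2)_l=|P_\lambda(-q)|_l$. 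Second, your plan to treat non-unipotent characters of the block via ``compatibility of Shintani descent with Jordan decomposition'' is both unproved as stated and unnecessary: the Watanabe--Kessar machinery shows that once the degree criterion holds for one character of a block it holds for all of them, so only the unipotent characters need to be analysed. The commutation with Deligne--Lusztig induction is likewise not needed for this theorem; in the paper it is a separate result proved afterwards by a hook-length computation.
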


\begin{proof}
From the explicit construction of the irreducible characters of $\operatorname{GL}_n(q^2)$ (see for example~\cite[$\S12$]{digmic1991}) we see that the
permutation on the characters induced by $F:\operatorname{GL}_n(q^2)\to\operatorname{GL}_n(q^2)$ sends $\chi^{GL}_{s,\lambda}$ to
$\chi^{GL}_{{}^Fs,{}^F\lambda}$.
\newline
\newline
Suppose $\Gamma\in\Delta$ has some of the primitive
${l^a}^{\operatorname{th}}$ roots of unity as its roots for some $a\geq0$. So $d_\Gamma$ is the minimal integer satisfying $l^a\mid(q^{2d_\Gamma}-1)$.
Now $l\nmid((-q)^{d_\Gamma}+1)$ as otherwise $-q$ would have even order $\mod l$ and so $l^a\mid((-q)^{d_\Gamma}-1)$
Therefore $\omega^{(-q)^{d_\Gamma}}=\omega$ for any root $\omega$ of $\Gamma$ and
\begin{align*}
\{\omega,\omega^{q^2},\omega^{q^4}\dots\}=\{\omega,\omega^{-q},\omega^{(-q)^2}\dots\}.
\end{align*}
So the roots of $\Gamma$ are left invariant under $(*)^{-q}$ and hence $\widetilde{\Gamma}=\Gamma$. This together with the first
paragraph implies that all irreducible characters of $B^{GL}_{1,\lambda}$ are $\sigma$-stable.
\newline
\newline
We now proceed as in the proofs of~\cite[Theorem 3]{Watanabe1999} and~\cite[Theorem 1.2]{Kessar2004}. For each $\chi\in B^{GL}_{1,\lambda}$ we denote by
$\tilde{\chi}$ the specified extension to $\operatorname{GL}_n(q^2)\rtimes\langle\sigma\rangle$ in Theorem~\ref{thm:shintani}. Let $(\sigma)$ be the sum
of the elements in the conjugacy class of $\operatorname{GL}_n(q^2)\rtimes\langle\sigma\rangle$ containing $\sigma$. Then
\begin{align*}
w_{\tilde{\chi}}((\sigma))=\frac{|\operatorname{GL}_n(q^2)|}{|\operatorname{U}_n(q)|}\frac{\operatorname{Sh}_{F^2/F}(\tilde{\chi})(1)}{\chi(1)},
\end{align*}
where $w_{\tilde{\chi}}$ denotes the central character of $\operatorname{GL}_n(q^2)\rtimes\langle\sigma\rangle$ corresponding to $\tilde{\chi}$. Note that
$|\operatorname{GL}_n(q^2)|_l=|\operatorname{U}_n(q)|_l$ and so the proof
proceeds exactly as in~\cite{Watanabe1999} and~\cite{Kessar2004} once we have shown
\begin{align}\label{note:deg}
\frac{\operatorname{Sh}_{F^2/F}(\tilde{\chi})(1)}{\chi(1)}\not\equiv0\mod l\text{ for all unipotent }
\chi\in\operatorname{Irr}(\operatorname{GL}_n(q^2)).
\end{align}
These two proofs also show that once the above statement is proved for one character it is immediately proved for all characters in the same block.
Furthermore, the image of this block under $\operatorname{Sh}_{F^2/F}$ is a block of $\operatorname{U}_n(q)$. Both these facts will be important in the
remainder of this proof.
\newline
\newline
We prove (\ref{note:deg}) by showing that $\operatorname{Sh}_{F^2/F}(\tilde{\chi}^{GL}_{1,\lambda})=\chi^U_{1,\lambda}$ for all $\lambda\vdash n$ and
noting that $P(q^2)_l=|P(-q)|_l$. We proceed by induction on $n$. First we note that the for $n=1$ the result is clearly true as the only unipotent
character $\chi^G_{1,(1)}$ of either group is the trivial character. Next let $n\geq2$ and set
\begin{align*}
\mathbf{L}:=\operatorname{GL}_1(\overline{\mathbb{F}_q})\times\operatorname{GL}_{n-2}(\overline{\mathbb{F}_q})\times
\operatorname{GL}_1(\overline{\mathbb{F}_q})\leq\mathbf{G}:=\operatorname{GL}_n(\overline{\mathbb{F}_q}),
\end{align*}
and $\mathbf{P}$ to be an $F-$stable parabolic with Levi decomposition $\mathbf{P}=\mathbf{LU}$ for some $\mathbf{U}$.
Assume $\operatorname{Sh}_{F^2/F}(\tilde{\chi}^{GL}_{1,\lambda})=\chi^U_{1,\lambda}$ for all $\lambda$ with $|\lambda|\leq n-2$. Then by a generalised
version of~\cite[Lemma 2.8]{shintani1976} (see~\cite[Proposition 1.1]{digne1999})
\begin{align*}
\operatorname{Ind}^{\mathbf{G}^{F^2}\rtimes\langle\sigma\rangle}_{\mathbf{P}^{F^2}\rtimes\langle\sigma\rangle}
\operatorname{Inf}^{\mathbf{P}^{F^2}\rtimes\langle\sigma\rangle}_{\mathbf{L}^{F^2}\rtimes\langle\sigma\rangle}(\tilde{\chi})(\sigma x)=
\operatorname{Ind}^{\mathbf{G}^{F}}_{\mathbf{P}^{F}}\operatorname{Inf}^{\mathbf{P}^{F}}_{\mathbf{L}^{F}}
(\operatorname{Sh}_{F^2/F}(\tilde{\chi}))(N_{F^2/F}(x))\\
\text{for all }\sigma-\text{stable }\chi\in\mathbf{L}^{F^2}\cong\operatorname{GL}_1(q^2)\times\operatorname{GL}_{n-2}(q^2)\times\operatorname{GL}_1(q^2)
\text{ and }x\in\mathbf{G}^{F^2}.
\end{align*}
Now assume $\operatorname{Sh}_{F^2/F}(\tilde{\chi}^{GL}_{1,\lambda})=\chi^U_{1,\lambda}$ for all $|\lambda|<n$. Set
\begin{align*}
\chi=\chi^{GL}_{1,(1)}\otimes\chi^{GL}_{1,\lambda}\otimes\chi^{GL}_{1,(1)}\in\operatorname{Irr(\mathbf{L}^{F^2})}
\end{align*}
for some $\lambda\vdash(n-2)$. So by the inductive hypothesis
\begin{align*}
\operatorname{Sh}_{F^2/F}(\tilde{\chi})=
\chi^{GL}_{1,(1)}\otimes\chi^U_{1,\lambda}\in\operatorname{Irr(\mathbf{L}^{F^2})}.
\end{align*}
Then (see~\cite[Proposition 1C]{fonsri1982})
\begin{align*}
\operatorname{Ind}^{\mathbf{G}^{F^2}}_{\mathbf{P}^{F^2}}
\operatorname{Inf}^{\mathbf{P}^{F^2}}_{\mathbf{L}^{F^2}}(\chi)=2\sum_{\mu\in I_\lambda(1,1)}\chi^{GL}_{1,\mu}+\sum_{\mu\in I_\lambda(2)}\chi^{GL}_{1,\mu},
\end{align*}
where $I_\lambda(1,1)$ is the set of partitions of $n$ obtained from $\lambda$ by adding two $1-$hooks on different rows and columns and
$I_\lambda(2)$ is the set of partitions of $n$ obtained from $\lambda$ by adding a $2-$hook. So
\begin{gather*}
\operatorname{Ind}^{\mathbf{G}^{F^2}\rtimes\langle\sigma\rangle}_{\mathbf{P}^{F^2}\rtimes\langle\sigma\rangle}
\operatorname{Inf}^{\mathbf{P}^{F^2}\rtimes\langle\sigma\rangle}_{\mathbf{L}^{F^2}\rtimes\langle\sigma\rangle}(\tilde{\chi}),\\
\text{is some extension to }\mathbf{G}^{F^2}\rtimes\langle\sigma\rangle\text{ of }
2\sum_{\mu\in I_\lambda(1,1)}\chi^{GL}_{1,\mu}+\sum_{\mu\in I_\lambda(2)}\chi^{GL}_{1,\mu}.
\end{gather*}
Also (by~\cite[Proposition 1C]{fonsri1982})
\begin{align*}
\operatorname{Ind}^{\mathbf{G}^F}_{\mathbf{P}^F}\operatorname{Inf}^{\mathbf{P}^F}_{\mathbf{L}^F}
(\operatorname{Sh}_{F^2/F}(\tilde{\chi}))=\sum_{\mu\in I_\lambda(2)}\chi^U_{1,\mu},
\end{align*}
where the sum is over partitions of $n$ obtained from $\lambda$ by adding a $2-$hook. Therefore by Theorem~\ref{thm:shintani} the sets
\begin{align*}
\{\chi^{GL}_{1,\mu}\}_{\mu\in I_\lambda(2)}\text{ and }\{\chi^U_{1,\mu}\}_{\mu\in I_\lambda(2)}
\end{align*}
correspond under $\operatorname{Sh}_{F^2/F}$. If $\mu$ has $2-$weight greater than $1$ then by analysising exactly for which $\lambda$ we have
$\mu\in I_\lambda(2)$ we obtain that $\operatorname{Sh}_{F^2/F}(\tilde{\chi}^{GL}_{1,\mu})=\chi^U_{1,\mu}$. The only difficulty therfore, is
in showing the inductive step when $\mu$ has $2-$weight $0$ or $1$.
\newline
\newline
Suppose $\mu$ has $2-$weight $0$. In other words $\mu$ is a $2-$core,
say $\mu=(t,t-1,\dots,1)\vdash n$. Note that we need only show $\operatorname{Sh}_{F^2/F}(\tilde{\chi}^{GL}_{1,\mu})$ is unipotent as by induction every other
unipotent character of $\operatorname{GL}_n(q^2)$ is sent to the unipotent character with the same label under Shintani descent. By varying $l$ we
can make $e$ arbitrarily large so let's assume $e>n$. Next set
\begin{align*}
\mu_e^+:=&(t+e-1,t-1,\dots,1)\text{ and}\\ \mathbf{L}:=&\operatorname{GL}_1(\overline{\mathbb{F}_q})^{(e-1)/2}\times
\operatorname{GL}_n(\overline{\mathbb{F}_q})\times\operatorname{GL}_1(\overline{\mathbb{F}_q})^{(e-1)/2}.
\end{align*}
Then as $\chi^{GL}_{1,\mu_e^+}$ appears with multiplicity $1$ in
\begin{align*}
\operatorname{Ind}^{\mathbf{G}^{F^2}}_{\mathbf{P}^{F^2}}\operatorname{Inf}^{\mathbf{P}^{F^2}}_{\mathbf{L}^{F^2}}
(\chi_{1,(1)}^{GL\otimes(e-1)/2}\otimes\chi^{GL}_{1,\mu}\otimes\chi_{1,(1)}^{GL\otimes(e-1)/2}),
\end{align*}
$\operatorname{Sh}_{F^2/F}(\tilde{\chi^{GL}_{1,\mu_e^+}})$ must be an irreducible constituent of 
\begin{align}\label{note:constituent}
\operatorname{Ind}^{\mathbf{G}^F}_{\mathbf{P}^F}\operatorname{Inf}^{\mathbf{P}^F}_{\mathbf{L}^F}
(\chi_{1,(1)}^{GL\otimes(e-1)/2}\otimes\operatorname{Sh}_{F^2/F}(\tilde{\chi}^{GL}_{1,\mu})).
\end{align}
Therefore, we need only show $\operatorname{Sh}_{F^2/F}(\tilde{\chi}^{GL}_{1,\mu_e^+})$ is unipotent. Now certainly the unipotent character labelled by
$(t-1,t-1,\dots,1^{e+1})$ gets sent to the unipotent character with the same label under Shintani descent as we can apply the original inductive argument as above.
Therefore, as blocks go to blocks, $\operatorname{Sh}_{F^2/F}(\tilde{\chi^{GL}_{1,\mu_e^+}})$ is in the unipotent block labelled by
$(t-1,t-1,\dots,1)$. This together with (\ref{note:constituent}) gives that $\operatorname{Sh}_{F^2/F}(\tilde{\chi}^{GL}_{1,\mu_e^+})$ is unipotent.
\newline
\newline
We next consider the case of $2-$weight $1$. Similarly to above we assume $e>n$ and set
\begin{align*}
\lambda&:=(t,t-1,\dots,1)\vdash n,\\
\lambda^+&:=(t+2,t-1,\dots,1)\vdash (n+2),\lambda^-:=(t,t-1,\dots,1^3)\vdash (n+2),\\
\lambda_e^+&:=(t+e-1,t-1,\dots,1),\lambda_e^-:=(t,t-1,\dots,1^e).
\end{align*}
By the original inductive hypothesis the sets
\begin{align*}
\{\chi^{GL}_{1,\mu}\}_{\mu\in I_\lambda(2)}\text{ and }\{\chi^U_{1,\mu}\}_{\mu\in I_\lambda(2)}
\end{align*}
correspond under Shintani descent. Note that these sets both consist of two unipotent characters labelled by $\lambda^+$ and $\lambda^-$. Now if
$\operatorname{Sh}_{F^2/F}(\tilde{\chi}^{GL}_{1,\lambda^+})=\chi^U_{1,\lambda^-}$ then by the inductive argument we can show that
$\operatorname{Sh}_{F^2/F}(\tilde{\chi}^{GL}_{1,\lambda_e^+})=\chi^U_{1,\lambda_e^-}$. But as in the previous argument
$\operatorname{Sh}_{F^2/F}(\tilde{\chi^{GL}_{1,\mu_e^+}})$ is in the unipotent block labelled by $(t-1,t-1,\dots,1)$. This is a contradiction and hence the result is proved.
\end{proof}

\section{Deligne-Lusztig Induction}

By an abuse of notation we denote by $I$ simultaneously all the the perfect isometries in Theorem~\ref{thm:perfisom}. For each $\lambda\vdash n$ we denote by
$\delta_\lambda$ the sign such that $I(\chi^{GL}_{1,\lambda})=\delta_\lambda\chi^U_{1,\lambda}$ in $I$. Note that $\delta_\lambda$ is only determined
reletive to $\delta_\mu$ where $\lambda$ and $\mu$ have the same $e-$core.

\begin{thm}
The perfect isometries $I$ can be chosen such that, on unipotent characters, the following diagram commutes
\begin{align*}
\begin{CD}
\operatorname{GL}_m(q)\times\operatorname{GL}_n(q) @> R^{\mathbf{G}^{F^2}}_{\mathbf{L}^{F^2}} >> \operatorname{GL}_{m+n}(q)\\
@VV I V @VV I V\\
\operatorname{U}_m(q)\times\operatorname{U}_n(q) @> R^{\mathbf{G}^F}_{\mathbf{L}^F} >> \operatorname{U}_{m+n}(q)
\end{CD},
\end{align*}
where
\begin{align*}
\mathbf{L}:=\operatorname{GL}_m(\overline{\mathbb{F}_q})\times\operatorname{GL}_n(\overline{\mathbb{F}_q}).
\end{align*}
\end{thm}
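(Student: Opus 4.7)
The plan is to choose $\delta_\lambda=1$ for every $\lambda$, so that on unipotent characters $I$ agrees with the Shintani descent bijection (a legitimate choice of perfect isometry by Theorem~\ref{thm:perfisom}); with this choice the diagram will commute as a consequence of the compatibility of Shintani descent with Deligne--Lusztig induction.

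First I would realise $\mathbf{L}=\operatorname{GL}_m\times\operatorname{GL}_n$ as an $F$-stable Levi of $\mathbf{G}$ on which $F$ acts factor-wise as the unitary Frobenius, so that $\mathbf{L}^F\cong\operatorname{U}_m(q)\times\operatorname{U}_n(q)$ and $\mathbf{L}^{F^2}\cong\operatorname{GL}_m(q^2)\times\operatorname{GL}_n(q^2)$. Shintani descent is multiplicative over this product decomposition, so by Theorem~\ref{thm:perfisom} applied to each factor we obtain $\operatorname{Sh}_{F^2/F}(\tilde{\chi}^{GL}_{1,\alpha}\otimes\tilde{\chi}^{GL}_{1,\beta})=\chi^{U}_{1,\alpha}\otimes\chi^{U}_{1,\beta}$, so that the canonical extension on $\mathbf{L}^{F^2}\rtimes\langle\sigma\rangle$ is correctly identified.

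Next I would apply the Shintani/Deligne--Lusztig compatibility (the generalisation of \cite[Proposition 1.1]{digne1999} already used in the proof of Theorem~\ref{thm:perfisom}) with $\tilde{\chi}=\tilde{\chi}^{GL}_{1,\alpha}\otimes\tilde{\chi}^{GL}_{1,\beta}$ to obtain
\begin{align*}
\operatorname{Sh}_{F^2/F}\left(R^{\mathbf{G}^{F^2}\rtimes\langle\sigma\rangle}_{\mathbf{L}^{F^2}\rtimes\langle\sigma\rangle}(\tilde{\chi})\right)=R^{\mathbf{G}^F}_{\mathbf{L}^F}(\chi^{U}_{1,\alpha}\otimes\chi^{U}_{1,\beta}).
\end{align*}
Writing the character on the left as $\sum_\gamma(a_\gamma\tilde{\chi}^{GL}_{1,\gamma}+b_\gamma\tilde{\chi}^{GL}_{1,\gamma}\cdot\operatorname{sgn})$ with $a_\gamma,b_\gamma\in\mathbb{Z}_{\geq 0}$ satisfying $a_\gamma+b_\gamma=c_\gamma$, where $c_\gamma$ is the multiplicity of $\chi^{GL}_{1,\gamma}$ in $R^{\mathbf{G}^{F^2}}_{\mathbf{L}^{F^2}}(\chi^{GL}_{1,\alpha}\otimes\chi^{GL}_{1,\beta})$, applying $\operatorname{Sh}_{F^2/F}$ yields $\sum_\gamma(a_\gamma-b_\gamma)\chi^{U}_{1,\gamma}$, which must equal $\sum_\gamma c'_\gamma\chi^{U}_{1,\gamma}$ with $c'_\gamma$ the corresponding multiplicity on the unitary side. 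Since Deligne--Lusztig induction of unipotent characters produces matching Littlewood--Richardson multiplicities on both sides (a consequence of the Fong--Srinivasan combinatorics and Lusztig's theory of unipotent characters), $c_\gamma=c'_\gamma$ and hence $b_\gamma=0$. Thus Shintani descent intertwines the two Deligne--Lusztig inductions coefficient-by-coefficient, and the diagram commutes with $\delta_\lambda=1$.

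The main obstacle is the Shintani/Deligne--Lusztig compatibility at the generality required: \cite[Proposition 1.1]{digne1999} covers Harish-Chandra induction from an $F$-stable parabolic, so although $\mathbf{L}$ should arise as the Levi of an $F$-stable parabolic of $\mathbf{G}$ (the stabiliser of a non-degenerate subspace of the underlying Hermitian space), careful verification of this is needed; otherwise a general Deligne--Lusztig version via trace-formula arguments à la Digne--Michel must be invoked. The secondary input identifying $c_\gamma=c'_\gamma$ should be routine from the combinatorial description of Harish-Chandra induction on unipotent characters.
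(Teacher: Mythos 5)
Your proposed route has several genuine gaps, and the paper takes a fundamentally different approach that explains why.

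First, the claim that one may ``choose $\delta_\lambda=1$ for every $\lambda$'' is not justified. The $\delta_\lambda$ are not free parameters: within a given block the relative signs are forced by the perfect-isometry structure (they come out of the central-character argument of Watanabe and Kessar, which pins down $\delta_\lambda/\delta_\mu$ via the ratio of character degrees mod $l$). Only the overall sign per block is a free choice, which is exactly what the paper's preceding remark (``$\delta_\lambda$ is only determined relative to $\delta_\mu$ where $\lambda$ and $\mu$ have the same $e$-core'') is recording. In fact the paper shows that $\delta_\lambda$ agrees, up to that block-wise constant, with $\epsilon_\lambda=\operatorname{sign}(P_\lambda(-q))$, and $P_\lambda(-q)$ is genuinely negative for many $\lambda$, so $\delta_\lambda\equiv 1$ is simply false.

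Second, your Levi $\mathbf{L}=\operatorname{GL}_m\times\operatorname{GL}_n$ with factor-wise unitary Frobenius is \emph{not} the Levi of an $F$-stable parabolic of $\mathbf{G}$ when $m\neq n$: the unitary Frobenius $A\mapsto JA^{-t[q]}J$ sends the block-upper-triangular parabolic of shape $(m,n)$ to the one of shape $(n,m)$. Parabolics of $\operatorname{U}_{m+n}(q)$ are stabilisers of totally isotropic flags, so the relevant Levis are of the form $\operatorname{GL}_{a_1}(q^2)\times\cdots\times\operatorname{U}_r(q)$; the stabiliser of a non-degenerate subspace is not a parabolic. Hence the Digne/Shintani compatibility you invoke for Harish-Chandra induction does not apply, and $R^{\mathbf{G}^F}_{\mathbf{L}^F}$ is a proper Deligne--Lusztig induction. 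Third, and relatedly, the coefficients $c'_\gamma$ in $R^{\mathbf{G}^F}_{\mathbf{L}^F}(\chi^U_{1,\alpha}\otimes\chi^U_{1,\beta})$ are not the bare Littlewood--Richardson numbers but those numbers twisted by the signs $\epsilon_\gamma$ (this is precisely the Fong--Srinivasan statement the paper quotes), so the conclusion $b_\gamma=0$ from $a_\gamma+b_\gamma=c_\gamma$ and $a_\gamma-b_\gamma=c'_\gamma$ collapses.

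The paper's strategy is to accept both families of signs as determined --- $\delta_\lambda$ from the perfect isometry (so $\delta_\lambda/\delta_\mu\equiv P_\lambda(q^2)|P_\mu(-q)|/(|P_\lambda(-q)|P_\mu(q^2))\bmod l$) and $\epsilon_\lambda=\operatorname{sign}(P_\lambda(-q))$ from the Fong--Srinivasan compatibility with $R^{\mathbf{G}^F}_{\mathbf{L}^F}$ --- and then to show they coincide up to a block-wise constant. This reduces to proving that $P_\lambda(q^2)/P_\lambda(-q)\bmod l$ depends only on the $e$-core of $\lambda$, which the paper does by an explicit hook-length computation showing invariance under adding an $e$-hook. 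That computation is the actual content of the theorem, and your proposal bypasses rather than supplies it.
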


\begin{proof}
As described in Fong and Srinivasan there exist signs $\epsilon_\lambda$ such that if we replace $I$ in the above diagram by
\begin{align*}
J:\chi^{GL}_{1,\lambda}\mapsto\epsilon_\lambda\chi^U_{1,\lambda}
\end{align*}
then the diagram is commutative and the sign of $\epsilon_\lambda$ is determined by the sign of $P_\lambda(-q)$. Therefore, we need only show that
\begin{gather*}
\frac{P_\lambda(q^2)}{|P_\lambda(-q)|}\frac{|P_\mu(-q)|}{P_\mu(q^2)}\equiv\frac{P_\lambda(-q)}{|P_\lambda(-q)|}\frac{|P_\mu(-q)|}{P_\mu(-q)}\mod l\\
\text{or equivalently }\frac{P_\lambda(q^2)}{P_\lambda(-q)}\mod l\text{ is fixed for }\lambda\text{ with a particular }e-\text{core}.
\end{gather*}
We prove this by induction on $|\lambda|$. Specfically we prove that if $\mu$ is obtained from $\lambda$ by adding an $e-$hook then
\begin{align}\label{align:equiv}
\frac{P_\lambda(q^2)}{P_\lambda(-q)}\equiv\frac{P_\mu(q^2)}{P_\mu(-q)}\mod l.
\end{align}
We first prove this for straight $e-$hooks. Let $\lambda:=(\lambda_1,\lambda_2,\dots)$. Suppose we add a straight $e-$hook to $\lambda$ to obtain
$\mu$. Let the top left corner of this hook be on the
$r^{\operatorname{th}}$ row and the $c^{\operatorname{th}}$ column and let $a-1$ be its armlength. So
\begin{align*}
\mu_r=&\lambda_r+a,\\
\mu_i=&\lambda_i+1\text{ for }(r+1\leq i\leq r+e-a),\\
\mu_i=&\lambda_i\text{ for }(i<r),(i>r+e-a).
\end{align*}
 Now
\begin{align*}
&(\sum_i\mu_i'^2-\sum_ii\mu_i)-(\sum_i\lambda_i'^2-\sum_ii\lambda_i)\\
=&(r+e-a)^2-(r-1)^2+(a-1)(r^2-(r-1)^2)-(ra+\sum_{i=r+1}^{r+e-a}i)\\
=&(r-a)e+\frac{e(e-1)}{2}+\frac{a(a-1)}{2}
\end{align*}
The hook lengths of $\mu$ are gotten from those of $\lambda$ by replacing
\begin{align*}
\begin{array}{cccc}
\lambda_1-c+r-1&\lambda_2-c+r-2&\dots&\lambda_{r-1}-c+1\\
\lambda_1-c+r-2&\lambda_2-c+r-3&\dots&\lambda_{r-1}-c\\
\vdots&\vdots&\ddots&\vdots\\
\lambda_1-c+r-a&\lambda_2-c+r-a-1&\dots&\lambda_{r-1}-c+2-a
\end{array}
\end{align*}
with
\begin{align*}
\begin{array}{cccc}
\lambda_1-c+r+e-a&\lambda_2-c+r+e-a-1&\dots&\lambda_{r-1}-c+e-a+2\\
\lambda_1-c+r-1&\lambda_2-c+r-2&\dots&\lambda_{r-1}-c+1\\
\vdots&\vdots&\ddots&\vdots\\
\lambda_1-c+r-a+1&\lambda_2-c+r-a&\dots&\lambda_{r-1}-c+3-a
\end{array},
\end{align*}
by replacing
\begin{align*}
\begin{array}{cccc}
\lambda_1'-r+c-1&\lambda_2'-r+c-2&\dots&\lambda_{c-1}'-r+1\\
\lambda_1'-r+c-2&\lambda_2'-r+c-3&\dots&\lambda_{c-1}'-r\\
\vdots&\vdots&\ddots&\vdots\\
\lambda_1'-r+c-e+a-1&\lambda_2'-r+c-e+a-2&\dots&\lambda_{c-1}'-r-e+a+1
\end{array}
\end{align*}
with
\begin{align*}
\begin{array}{cccc}
\lambda_1'-r+c-1+a&\lambda_2'-r+c-2+a&\dots&\lambda_{c-1}'-r+1+a\\
\lambda_1'-r+c-1&\lambda_2'-r+c-2&\dots&\lambda_{c-1}'-r+1\\
\vdots&\vdots&\ddots&\vdots\\
\lambda_1'-r+c-e+a&\lambda_2'-r+c-e+a-1&\dots&\lambda_{c-1}'-r-e+a+2
\end{array}
\end{align*}
and by adding
\begin{align*}
\begin{array}{cccc}
&&e&\\
1&2&\dots&a-1\\
1&2&\dots&e-a
\end{array}
\end{align*}
or alternatively by replacing
\begin{align*}
\begin{array}{cccc}
\lambda_1-c+r-a&\lambda_2-c+r-a-1&\dots&\lambda_{r-1}-c+2-a
\end{array}
\end{align*}
with
\begin{align*}
\begin{array}{cccc}
\lambda_1-c+r+e-a&\lambda_2-c+r+e-a-1&\dots&\lambda_{r-1}-c+e-a+2
\end{array},
\end{align*}
by replacing
\begin{align*}
\begin{array}{cccc}
\lambda_1'-r+c-e+a-1&\lambda_2'-r+c-e+a-2&\dots&\lambda_{c-1}'-r-e+a+1
\end{array}
\end{align*}
with
\begin{align*}
\begin{array}{cccc}
\lambda_1'-r+c-1+a&\lambda_2'-r+c-2+a&\dots&\lambda_{c-1}'-r+1+a
\end{array}
\end{align*}
and by adding
\begin{align*}
\begin{array}{cccc}
&&e&\\
1&2&\dots&a-1\\
1&2&\dots&e-a
\end{array}.
\end{align*}
Now as
\begin{align*}
\frac{P_\lambda(q^2)}{P_\lambda(-q)}=\frac{(-q)^d((-q)^n+1)((-q)^{n-1}+1)\dots((-q)+1)}{\prod_h((-q)^h+1)},
\end{align*}
and as $(-q)^{t+e}\equiv(-q)^t\mod l$ for any integer $t$, then
\begin{align*}
\frac{P_\lambda(-q)}{P_\lambda(q^2)}\frac{P_\mu(q^2)}{P_\mu(-q)}&\equiv
\frac{(-q)^{\frac{a(a-1)}{2}}\prod_{i=n+1}^{n+e}((-q)^i+1)}{((-q)^e+1)\prod_{i=1}^{a-1}((-q)^i+1)\prod_{i=1}^{e-a}((-q)^i+1)}\\
&\equiv\frac{\prod_{i=n+1}^{n+e}((-q)^i+1)}{((-q)^e+1)\prod_{i=1}^{a-1}(1+(-q)^{-i})\prod_{i=1}^{e-a}((-q)^i+1)}\\
&\equiv\frac{\prod_{i=1}^e((-q)^i+1)}{\prod_{i=1}^e((-q)^i+1)}\\
&\equiv1.
\end{align*}
We now drop the assumption that the $e-$hook is straight. We prove (\ref{align:equiv}) by induction on the deviation of the hook from
being a straight hook. More precisely let $\lambda$ and $\mu$ be as in (\ref{align:equiv}) and let $\overline{\lambda}\vdash(n+1)$ be
obtained from $\lambda$ by adding a $1-$hook. Suppose this $1-$hook is added on row $r$ and column $c$ and that
$(r,c)$, $(r+1,c)$ and $(r,c+1)$ are all in the Young diagram of $\mu$. Let $\overline{\mu}\vdash(n+e+1)$ be obtained from $\mu$ by adding
a $1-$hook at $(r+1,c+1)$. We show that
\begin{align*}
\frac{P_\lambda(q^2)}{P_\lambda(-q)}\frac{P_{\overline{\lambda}}(-q)}{P_{\overline{\lambda}}(q^2)}\equiv
\frac{P_\mu(q^2)}{P_\mu(-q)}\frac{P_{\overline{\mu}}(-q)}{P_{\overline{\mu}}(q^2)}\mod l.
\end{align*}
Let's denote by $u$ and $v$ the row of the highest box and the column of the leftmost box in $\mu\backslash\lambda$ respectively.
Now
\begin{align*}
&(\sum_i\overline{\mu}_i'^2-\sum_ii\overline{\mu}_i)-(\sum_i\overline{\lambda}_i'^2-\sum_ii\overline{\lambda}_i)-
(\sum_i\mu_i'^2-\sum_ii\mu_i)+(\sum_i\lambda_i'^2-\sum_ii\lambda_i)\\
=&(\mu_c'^2+(\mu_{c+1}'+1)^2-r\mu_r-(r+1)(\mu_{r+1}+1))-(\mu_c'^2+\mu_{c+1}'^2-r\mu_r-(r+1)\mu_{r+1})\\
-&((\lambda_c'+1)^2+\lambda_{c+1}'^2-r(\lambda_r+1)-(r+1)\lambda_{r+1})+(\lambda_c'^2+\lambda_{c+1}'^2-r\lambda_r-(r+1)\lambda_{r+1})\\
=&2\mu_{c+1}'-2\lambda_c'-1\\
=&2r-2(r-1)-1\\
=&1.
\end{align*}
The hook lengths of $\overline{\lambda}$ are gotten from those of $\lambda$ by replacing
\begin{align*}
\begin{array}{cccc}
\lambda_1-c+r-1&\lambda_2-c+r-2&\dots&\lambda_{r-1}-c+1
\end{array}
\end{align*}
with
\begin{align*}
\begin{array}{cccc}
\lambda_1-c+r&\lambda_2-c+r-1&\dots&\lambda_{r-1}-c+2
\end{array},
\end{align*}
by replacing
\begin{align*}
\begin{array}{cccc}
\lambda_1'-r+c-1&\lambda_2'-r+c-2&\dots&\lambda_{c-1}'-r+1
\end{array}
\end{align*}
with
\begin{align*}
\begin{array}{cccc}
\lambda_1'-r+c&\lambda_2'-r+c-1&\dots&\lambda_{c-1}'-r+2
\end{array}
\end{align*}
and by adding $1$. Similarly the hook lengths of $\overline{\mu}$ are gotten from those of $\mu$ by replacing
\begin{align*}
\begin{array}{cccc}
\mu_1-c+r-1&\mu_2-c+r-2&\dots&\mu_r-c
\end{array}
\end{align*}
with
\begin{align*}
\begin{array}{cccc}
\mu_1-c+r&\mu_2-c+r-1&\dots&\mu_r-c+1
\end{array},
\end{align*}
by replacing
\begin{align*}
\begin{array}{cccc}
\mu_1'-r+c-1&\mu_2'-r+c-2&\dots&\mu_c'-r
\end{array}
\end{align*}
with
\begin{align*}
\begin{array}{cccc}
\mu_1'-r+c&\mu_2'-r+c-1&\dots&\mu_c'-r+1
\end{array}
\end{align*}
and by adding $1$. Note that for $i<u$, $\lambda_i=\mu_i$ and for $i<v$, $\lambda_i'=\mu_i'$. Therefore
\begin{align*}
&\frac{P_\lambda(q^2)}{P_\lambda(-q)}\frac{P_{\overline{\lambda}}(-q)}{P_{\overline{\lambda}}(q^2)}
\frac{P_\mu(-q)}{P_\mu(q^2)}\frac{P_{\overline{\mu}}(q^2)}{P_{\overline{\mu}}(-q)}\\
\equiv&(-q)\frac{((-q)^{n+e+1}+1)\prod_{i=u}^{r-1}((-q)^{\lambda_i-c+r-i+1}+1)\prod_{i=v}^{c-1}((-q)^{\lambda_i'-r+c-i+1}+1)}
{((-q)^{n+1}+1)\prod_{i=u}^{r-1}((-q)^{\lambda_i-c+r-i}+1)\prod_{i=v}^{c-1}((-q)^{\lambda_i'-r+c-i}+1)}.\\
&\frac{\prod_{i=u}^{r}((-q)^{\mu_i-c+r-i}+1)\prod_{i=v}^{c}((-q)^{\mu_i'-r+c-i}+1)}
{\prod_{i=u}^{r}((-q)^{\mu_i-c+r-i+1}+1)\prod_{i=v}^{c}((-q)^{\mu_i'-r+c-i+1}+1)}.
\end{align*}
Next we note that for all $i$ with $(u\leq i\leq r-1)$ and for all $j$ with $(v\leq j\leq c-1)$ we have
\begin{gather*}
\lambda_i=\mu_{i+1}-1\text{ and }\lambda_j'=\mu_{j+1}'-1\\
\text{and hence}\\
\lambda_i-c+r-i=\mu_{i+1}-c+r-(i+1)\text{ and }\lambda_j'-r+c-j=\mu_{j+1}'-r+c-(j+1).
\end{gather*}
Therefore we get
\begin{align*}
&(-q)\frac{((-q)^{\mu_u-c+r-u}+1)((-q)^{\mu_v'-r+c-v}+1)}{((-q)^{\mu_u-c+r-u+1}+1)((-q)^{\mu_v'-r+c-v+1}+1)}.
\end{align*}
Now by the definition of $u$ and $v$ and that the hook we add is in fact an $e-$hook we see that
\begin{align*}
\mu_u+\mu_v'-u-v+1=e.
\end{align*}
Therefore we get
\begin{align*}
&(-q)\frac{(-q)^{\mu_u-c+r-u}(1+(-q)^{-(\mu_u-c+r-u)})((-q)^{\mu_v'-r+c-v}+1)}
{(-q)^{\mu_u-c+r-u+1}(1+(-q)^{-(\mu_u-c+r-u+1)})((-q)^{\mu_v'-r+c-v+1}+1)}\\
\equiv&(-q)\frac{(-q)^{\mu_u-c+r-u}(1+(-q)^{\mu_v'-r+c-v+1})((-q)^{\mu_v'-r+c-v}+1)}
{(-q)^{\mu_u-c+r-u+1}(1+(-q)^{\mu_v'-r+c-v)})((-q)^{\mu_v'-r+c-v+1}+1)}\\
\equiv&1
\end{align*}
as desired.
\end{proof}

\begin{ack*}
The author gratefully, acknowledges financial support by ERC Advanced Grant $291512$.
\end{ack*}

\nocite{kawanaka1977, broue1990, Watanabe1999, Kessar2004, digmic1991, digne1999, shintani1976, bonnaf1998, fonsri1982}

\newpage
\bibliographystyle{plain}
\bibliography{biblio}

\end{document}